\newtheorem{theorem}{Theorem}[section]
\newtheorem{proposition}[theorem]{Proposition}
\newtheorem{lemma}[theorem]{Lemma}
\theoremstyle{definition}
\newtheorem{definition}[theorem]{Definition}
\theoremstyle{remark}
\newtheorem{remark}[theorem]{Remark}
\numberwithin{equation}{section}
\def \R{{\mathbb R}}
\def \s {{\mathbb S}}
\def \h {{\mathbb H}}
\def \link {~}
\def \1 {\`}
\numberwithin{equation}{section}
\begin{document}

\title[Triharmonic curves in the $3$-dimensional Sol space]{Triharmonic curves in the $3$-dimensional Sol space}

\author{S.~Montaldo}
\address{Universit\`a degli Studi di Cagliari\\
Dipartimento di Matematica e Informatica\\
Via Ospedale 72\\
09124 Cagliari, Italia}
\email{montaldo@unica.it}

\author{A.~Ratto}
\address{Universit\`a degli Studi di Cagliari\\
Dipartimento di Matematica e Informatica\\
Via Ospedale 72\\
09124 Cagliari, Italia}
\email{rattoa@unica.it}

\thanks{The authors S.M. and A.R. are members of the Italian National Group G.N.S.A.G.A. of INdAM. The work was partially supported by the Project {ISI-HOMOS} funded by Fondazione di Sardegna and by PNRR e.INS Ecosystem of Innovation for Next Generation Sardinia (CUP F53C22000430001, codice MUR ECS00000038)}

\subjclass[2020]{Primary: 58E20. Secondary: 53A04, 53C30.}

\keywords{Triharmonic curves, $3$-dimensional Sol space, helices, Killing fields}

\begin{abstract}
The main aim of this paper is to study triharmonic curves in the $3$-dimensional homogeneous space Sol. In the first part of the paper we shall obtain a complete classification of proper triharmonic curves with constant geodesic curvature and torsion. In the final section we shall show that these triharmonic curves form a constant angle with a suitable Killing field of constant length along the curve.
\end{abstract}

\maketitle

\section{Introduction}
\textit{Harmonic maps} are the critical points of the {\em energy functional}
\begin{equation}\label{energia}
E(\varphi)=\frac{1}{2}\int_{M}\,|d\varphi|^2\,dV \, ,
\end{equation}
where $\varphi:M\to N$ is a smooth map between two Riemannian
manifolds $(M,g)$ and $(N,h)$. We refer to \cite{MR703510, MR1363513} for an introduction to this important topic. 

The study of \textit{polyharmonic maps of order} $r$, shortly \textit{$r$-harmonic maps}, was first proposed by Eells and Lemaire in \cite{MR703510}. These maps are defined as the critical points of the functionals 
\[
E_{r}(\varphi)=\frac{1}{2}\int_{M}\,|(d+d^*)^r\varphi|^2\,dV 
\]
which represent a version of order $r$ of the classical energy \eqref{energia}. We refer to \cite{MR4106647} for a detailed discussion on the definition of the functionals $E_{r}(\varphi)$.

In this context, the most widely studied case is $r=2$, where we have the so-called \textit{bienergy} functional and its critical points are known as \textit{biharmonic maps}. At present, a very ample literature on biharmonic maps is available (see \cite{MR4265170} and references therein).

More recently, several authors worked intensively on the case $ r \geq 3$. For instance, we refer to  \cite{MR4106647, MR4444191, MR4632927, MR4598081,MR2869168, MR3403738, MR3371364, MOR-Israel, MR3711937, MR3790367} for background and results.

In this paper we shall focus on the study of \textit{$3$-harmonic curves} in the $3$-dimensional solvable Lie group Sol. 

More generally,  let $\gamma:I \subset \R \to N$ denote {a smooth curve} parametrized by the arc length $s$ and let $T$ denote its unit tangent vector. Then $\gamma$ is $r$-harmonic if and only if (see \cite{MR4542687,MR2869168,MR4308322})
\begin{equation}\label{r-harmonicity-curves}
\nabla^{2r-1}_T T+ \sum_{\ell=0}^{r-2}(-1)^\ell R\left (\nabla^{2r-3-\ell}_T T ,\nabla^{\ell}_T T\right )T=0 \,,
\end{equation}
where $\nabla^{0}_T T=T,\,\nabla^{k}_T T=\nabla_T \left ( \nabla^{k-1}_T T\right ) $ and $R$ is the Riemannian curvature operator of $N$.

We point out that any geodesic is trivially $r$-harmonic for all $r \geq 1$. Therefore, we say that $\gamma$ is a \textit{proper} $r$-harmonic curve if it is $r$-harmonic and \textit{not} a geodesic.

The Chen-Maeta conjecture states that any $r$-harmonic submanifolds of $\R^n$ is minimal. In this direction, Maeta (see \cite{MR2869168}) proved that the conjecture is true for curves. Note that, in order to avoid trivialities arising from re-parametrization, it is convenient to restrict the attention to curves parametri\-zed by the arc length, and we shall adopt this assumption throughout the whole paper. 

By contrast, when the target is an $n$-dimensional sphere, examples of proper $r$-harmonic curves do exist. We refer to the paper of Branding \cite{MR4542687} and references therein for an illustration of the case of $r$-harmonic curves into space forms.

An investigation of triharmonic curves in $3$-dimensional homogeneous spaces with a $4$-dimensional group of isometries was carried out in \cite{MR4308322}.

As a successive step, in this paper we undertake the study of triharmonic curves in the $3$-dimensional homogeneous geometry with isometries group of dimension $3$, i.e., the $3$-dimensional solvable Lie group Sol. A study of triharmonic curves in the $3$-dimensional solvable Lie group Sol was initiated in \cite{MR4424860}, where the author gives the explicit relevant differential equations.

It is natural to start this investigation by considering a special class of geometrically significant curves, the so-called helices. 

Classically, in the Euclidean $3$-dimensional space $\R^3$, a \textit{general helix} is a curve characterized by the property that its tangent vector
makes a constant angle with a fixed straight line which represents the axis of the helix.

A famous result, stated by M. A. Lancret in $1802$ and first proved by B. de Saint Venant in $1845$, is: \textit{A necessary and sufficient condition that
a curve be a general helix is that the ratio of curvature to torsion be constant}.

Helices arise in nanosprings, carbon nanotubes and the double helix shape is commonly associated with the structure of DNA. 

In a Riemannian geometric context, the definition of a general helix is more delicate and some new interesting phenomena appear (see \cite{MR1363411}). For instance, for general helices according to \cite{MR1363411},  in the $3$-dimensional sphere $\s^3$ the ratio of the curvature and the torsion is not necessarily constant.

In the first part of the paper we follow the approach and the notation of \cite{MR4308322} and say that a  Frenet curve $\gamma(s)$ is a \textit{helix} if its geodesic curvature $\kappa(s)$ and its torsion $\tau(s)$ are constant. 
In Theorem\link\ref{Th-main-Existence} we shall obtain a \textit{complete classification of proper $3$-harmonic helices in the $3$-dimensional Sol space}.

Our paper is organised as follows. In Section\link\ref{sec-preliminaries} we recall some basic facts about curves and the geometry of Sol. The main aim of this section is to make this paper reasonably self-contained. 

In Section\link\ref{sec-results} we shall state our main result.
Next, proofs will be given in Section\link\ref{Sec-proofs}. 

Finally, in Section\link\ref{Sec-helices} we shall follow \cite{MR1363411} and consider a more general definition of a helix. This approach involves Killing fields which have constant length along $\gamma$. In this context, we shall show that {the proper $3$-harmonic curve determined in Theorem\link\ref{Th-main-Existence} is an integral curve of one of these Killing fields}. 

\section{Preliminaries}\label{sec-preliminaries}
We adopt the following notation and sign convention for the Riemannian curvature tensor field $R$:
\begin{equation}\notag
 {R}(X,Y)Z={\nabla}_{X}{\nabla}_{Y}Z
-{\nabla}_{Y}{\nabla}_{X}Z-{\nabla}_{[X,Y]}Z\,, \quad  {R}(X,Y,Z,W)=\langle {R}(X,Y)W,Z \rangle \,,
\end{equation}
where $X,Y,Z,W$ are vector fields on $N$.

Let Sol $=(\mathbb{R}^{3},g_{Sol})$ denote the $3$-dimensional Riemannian manifold given by $\R^3$ endowed with the metric tensor $g_{Sol}=e^{2z}{\rm d}x^{2}+e^{-2z}{\rm
d}y^{2}+{\rm d}z^{2}$ with respect to the standard Cartesian coordinates
$(x,y,z)$. 

The space Sol is a solvable Lie group and the metric $g_{Sol}$ is left-invariant with respect to the group operation
\[
(x,y,z).(x',y',z')=\left (e^{z} x'+x,e^{-z}y'+y,z'+z \right )\,.
\] 
The group of isometries of Sol has dimension $3$ and the connected component of the identity is generated by the following three families: 
\begin{equation}\label{isometries-SOL}
\begin{array}{l}
(x,y,z) \to (x+c,y,z)\\
(x,y,z) \to (x,y+c,z)\\
(x,y,z) \to (e^{-c}x , e^{c}y,z+c) \,.
\end{array}
\end{equation}
For future use, we point out that also
\begin{equation}\label{isometries-bis-SOL}
\begin{array}{l}
(x,y,z) \to (-x,y,z)\\
(x,y,z) \to (x,-y,z) \,.
\end{array}
\end{equation}
are isometries of Sol.
Moreover, one can easily check that a global
orthonormal frame field on Sol is
\begin{equation}\notag
E_{1}=e^{-z}\frac{\partial}{\partial x},\;
E_{2}=e^{z}\frac{\partial}{\partial y},
\;E_{3}=\frac{\partial}{\partial z}.
\end{equation}
With respect to this orthonormal frame field, the Lie brackets and the
Levi-Civita connection can be easily computed as:
\begin{equation}\notag
[E_{1},E_{2}]=0, \;[E_{2},E_{3}]=-E_{2}, \;[E_{1},E_{3}]=E_{1},
\end{equation}
\begin{equation}\label{Derivate-Ei-Ej}
\begin{array}{lll}
\nabla_{E_{1}}E_{1}=-E_{3},&\nabla_{E_{1}}E_{2}=0, &\nabla_{E_{1}}E_{3}=E_{1}\\
\nabla_{E_{2}}E_{1}=0,& \nabla_{E_{2}}E_{2}=E_{3},&\nabla_{E_{2}}E_{3}=-E_{2}\\
\nabla_{E_{3}}E_{1}=0,&\nabla_{E_{3}}E_{2}=0,&\nabla_{E_{3}}E_{3}=0.\\
\end{array}
\end{equation}
A further computation gives
\begin{equation}\label{curv-1}
\begin{array}{lll}
R(E_1,E_2)E_1=-E_{2},& R(E_1,E_3)E_1=E_{3},
&R(E_1,E_2)E_2=E_{1},\\R(E_2,E_3)E_2=E_{3},&
R(E_1,E_3)E_3=-E_{1},&R(E_2,E_3)E_3=-E_{2},
\end{array}
\end{equation}
and from this it is easy to deduce
\begin{equation}\label{curv-2}
\begin{array}{lll}
 R_{1212}=1,\\
R_{1313}=-1,\\
R_{2323}=-1.
\end{array}
\end{equation}
Now, let $\gamma(s)=(x(s),y(s),z(s))$ be a \textit{Frenet curve} in Sol parametrized by arc length. Then
\begin{equation}\label{T}
T={x'}e^z E_1 +{y'}e^{-z} E_2+{z'} E_3=\left ( T_1,T_2,T_3 \right )
\end{equation}
and we can associate to $\gamma$ the classical Frenet equations
\begin{eqnarray}\label{Frenet-field}\nonumber
\nabla_T T&=& \kappa \,N \\
\nabla_T N&=& -\kappa \,T+ \tau \,B \\\nonumber
\nabla_T B&=& - \tau  \,N\,, 
\end{eqnarray}
where $\{T,N,B\}$ is a global orthonormal frame field along $\gamma$ called the {\em Frenet frame field} and the functions $\kappa=\kappa(s)>0$, $\tau=\tau(s)$ are called the \textit{geodesic curvature} and the \textit{torsion} of $\gamma$ respectively. 

Next, let $V=V_1(s)E_1+V_2(s)E_2+V_3(s)E_3=\left ( V_1(s),V_2(s),V_3(s) \right )$ be any vector field along $\gamma$. Then a computation using \eqref{Derivate-Ei-Ej} and \eqref{T} shows that
\begin{equation}\label{nablaTV}
\nabla_T V=\left ( {V_1}'+T_1 V_3,{V_2}'-T_2 V_3,{V_3}'-T_1 V_1+T_2V_2\right )\,.
\end{equation}

For our purposes, it will be useful to know the explicit expression of $\kappa(s)$ and $\tau(s)$ for a Frenet curve $\gamma(s)=(x(s),y(s),z(s))$. Observing that
\[
\kappa^2(s)=\langle \nabla_T T,\nabla_T T \rangle \,\quad {\rm and }\quad \tau(s)=-\langle \nabla_T B, N \rangle
\]
a long, straightforward computation using \eqref{nablaTV} gives:
\begin{equation}\label{geodesiccurvaturegeneral}
\kappa^2(s)=e^{2 z} \left(x''+2 x'
   z'\right)^2+e^{-2 z} \left(y''-2 y' z'\right)^2+\left(z''-e^{2 z} x'^2+e^{-2 z} y'^2\right)^2
\end{equation}
and
\begin{equation}\label{torsionegeneral}
\tau(s)=\frac{{\mathcal A}}{{\mathcal B}},
\end{equation}
where

{\small
\begin{eqnarray*}
{\mathcal A}\hspace{-4mm}&=\hspace{-4mm}&2 e^{8 z} x'^5 y'+2 x' y'^5+e^{6 z} x' (3 x''^2
   y'+x'^2 (y^{(3)}-6 y'' z'+8 y' (2 z'^2-z''))\\
   &&+x' (y'
   (14 x'' z'-x^{(3)})-3 x'' y''))+e^{2 z} y' (x' (3 y''^2+8
   y'^2 (z''+2 z'^2)\\
   &&-y' (y^{(3)}+14 y'' z'))
  +y' (y'
   (x^{(3)}+6 x'' z')-3 x'' y''))\\
   &&+e^{4 z} (-4 x'^3 y'^3+x'
   (-y^{(3)} z''+2 y^{(3)} z'^2+z^{(3)} y''-8 y'' z'^3
  +y' (6
   z''^2+8 z'^4-4 z^{(3)} z'))\\
   &&+y' (x^{(3)} z''+2 x^{(3)}
   z'^2-z^{(3)} x''+8 x'' z'^3)+z' (y^{(3)} x''-y'' (x^{(3)}+6 x''
   z')))
   \end{eqnarray*}
and 
\begin{eqnarray*}
{\mathcal B}&=&e^{4 z} (z''^2-2 x'^2 y'^2)+e^{8 z}
   x'^4+e^{6 z} ((x''+2 x' z')^2-2 x'^2
   z'')\\
   &&+y'^4+e^{2 z} (2 y'^2 z''+(y''-2 y' z')^2)\,.
 \end{eqnarray*}
 }
As a special case of \eqref{r-harmonicity-curves} we see that $\gamma$ is triharmonic if it satisfies
\begin{equation}\label{3-harmonicity-curves}
\nabla^{5}_T T+ R\left (\nabla^{3}_T T , T\right )T - R\left (\nabla^{2}_T T ,\nabla_T T\right )T=0 \,.
\end{equation}
In the following proposition we compute explicitly the left side of \eqref{3-harmonicity-curves} and we obtain:
\begin{proposition}\label{Proptau3-explicit}Let $\gamma(s)$ be a Frenet curve in Sol parametrized by arc length. Then $\gamma$ is proper triharmonic if and only if the following three equations hold:
{\small
\begin{equation}\label{Tau3-EqT}
5 \left[2 \kappa^3 \kappa'+\kappa \left(\tau ^2 \kappa'-\kappa^{(3)}\right)-2 \kappa' \kappa''+\kappa^2 \tau  \tau
   '\right]=0 \,;
\end{equation}
\begin{equation}\label{Tau3-EqN}
\begin{array}{l}
-2 \kappa^2 \big({B_3} {T_3} \tau +5 \kappa''\big)+\kappa \big[-2 {B_3} {N_3} \tau '-15
   \kappa'^2+\big(1-2 {B_3}^2\big) \tau ^2
   -4 \tau  \tau ''-3 \tau '^2+\tau
   ^4\big]\\
   \\-4 \tau  \kappa' \big({B_3} {N_3}+3 \tau '\big)+\kappa^{(4)}-2 \kappa''
   \big(1-{B_3}^2+3 \tau ^2\big)
   +\kappa''+2 \kappa^3 \big(1-2 {B_3}^2+\tau ^2\big)+\kappa^5=0\,;
\end{array}
\end{equation}
\begin{equation}\label{Tau3-EqB}
\begin{array}{l}
-2 \kappa'' ({B_3} {N_3}-3 \tau ')+\kappa \big(2 \tau ^2 ({B_3} {N_3}-3 \tau
   ')+(2 {N_3}^2-1) \tau '+\tau ^{(3)}\big)+\kappa^3 (4 {B_3}
   {N_3}-\tau ')\\\\+\kappa^2 \tau  (2 {N_3} {T_3}-9 \kappa')+4 \kappa' \tau ''-4
   \tau ^3 \kappa'+2 \tau \big (2 \kappa^{(3)}+(2 {N_3}^2-1) \kappa'\big)=0\,.
\end{array}
\end{equation}
}
\end{proposition}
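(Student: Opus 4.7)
The proof is a direct expansion of the left-hand side of \eqref{3-harmonicity-curves} in the Frenet frame $\{T,N,B\}$. I describe the plan in three steps.

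First, I would iterate the Frenet equations \eqref{Frenet-field} to expand each $\nabla_T^k T$, for $k=2,\dots,5$, as a combination $a_k T + b_k N + c_k B$ whose coefficients are polynomials in $\kappa,\tau$ and their derivatives up to order $k-1$. For instance $\nabla_T^2 T = -\kappa^2 T + \kappa' N + \kappa\tau B$, and the expressions grow systematically at each step with no curvature input.

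Second, I would treat the two curvature terms $R(\nabla_T^3 T,T)T$ and $R(\nabla_T^2 T,\nabla_T T)T$. A useful preliminary identity, which can be verified on basis vectors from \eqref{curv-1}, is
\begin{equation*}
R(X,Y)Z = \bigl[\langle Y,Z\rangle - 2Y_3 Z_3\bigr] X - \bigl[\langle X,Z\rangle - 2X_3 Z_3\bigr] Y + 2\bigl[Y_3\langle X,Z\rangle - X_3\langle Y,Z\rangle\bigr] E_3,
\end{equation*}
where $V_3 = \langle V,E_3\rangle$ denotes the third component of $V$ in the Sol frame. This closed form reflects the distinguished role of $E_3$: the sectional curvature of Sol is $+1$ on the plane $E_1\wedge E_2$ and $-1$ on each plane containing $E_3$. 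Substituting the expansions from step one, the curvature contributions become fully explicit expressions in $\kappa,\tau$, their derivatives, and the third components $T_3, N_3, B_3$ of the Frenet vectors.

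Third, I would project the identity \eqref{3-harmonicity-curves} onto $T$, $N$ and $B$. The $T$-component of both curvature contributions vanishes since $\langle R(V,W)T,T\rangle = 0$ by the skew-symmetry of $R$ in its last pair of arguments; thus the $T$-equation is purely intrinsic and gives \eqref{Tau3-EqT}. The $N$- and $B$-components inherit the terms involving $E_3$; using the orthonormality of $\{T,N,B\}$ to eliminate products of the remaining components in favor of $T_3, N_3, B_3$, and collecting everything, one recovers \eqref{Tau3-EqN} and \eqref{Tau3-EqB}.

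The main obstacle is purely the bookkeeping: the fifth-order covariant derivative $\nabla_T^5 T$ and its interaction with the curvature terms produce a large number of monomials in $\kappa,\tau,\kappa',\tau',\dots$ and in $T_3, N_3, B_3$, and the challenge is to combine them correctly into the compact form stated in the proposition. Once the closed form for $R(X,Y)Z$ above is in hand, no further conceptual difficulty arises.
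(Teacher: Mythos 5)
Your proposal is correct and takes essentially the same route as the paper: iterate the Frenet equations to expand $\nabla_T^k T$ in the frame $\{T,N,B\}$, express the curvature contributions through the third components $T_3,N_3,B_3$, and project \eqref{3-harmonicity-curves} onto $T$, $N$ and $B$ (with the $T$-component of the curvature terms vanishing by skew-symmetry). Your closed-form identity for $R(X,Y)Z$, which checks out against \eqref{curv-1}, is merely a compact repackaging of the five curvature components $R(T,N,T,N)$, $R(T,N,T,B)$, etc.\ that the paper tabulates in \eqref{curv-3}, so the two computations coincide.
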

The left sides of \eqref{Tau3-EqT}, \eqref{Tau3-EqN} and \eqref{Tau3-EqB} are precisely the component of the $3$-tension field with respect to the Frenet frame field. The proof of Proposition\link\ref{Proptau3-explicit} will be carried out in Section\link\ref{Sec-proofs}.

\section{{Main Theorem}}\label{sec-results}
A simple look at Proposition\link\ref{Proptau3-explicit}, together with the general formulae \eqref{geodesiccurvaturegeneral} and \eqref{torsionegeneral}, suggest that a complete study of proper triharmonic curves in Sol may be a rather difficult task. 
Therefore, as a starting point, we restrict our investigation to a suitable family of geometrically significant curves. 
\begin{definition}\label{Def-helix} We say that a Frenet curve $\gamma(s)$ in Sol is a \textit{Frenet helix} (shortly, a \textit{helix}) if its geodesic curvature $\kappa(s)$ and its torsion $\tau(s)$ are both constant, say $\kappa(s)=a>0$, $\tau(s)=b$. 
\end{definition}
We prove a complete classification of proper triharmonic Frenet helices in Sol:
\begin{theorem}\label{Th-main-Existence} There exist a proper triharmonic Frenet helix $\gamma$ in Sol if and only if $\kappa(s)=a=1/2$ and $\tau(s)=b=\pm 1/2$. 
Moreover, up to isometries of Sol, the parametrization of such proper triharmonic helix when $b=1/2$ is
\begin{equation}\label{parametrizations}
\gamma(s)=\frac{1}{\sqrt 2}\left (-e^{-\frac{s}{\sqrt 2}},\, e^{\frac{s}{\sqrt{2}}}\,,\,\,s \right) \,.
\end{equation}
\end{theorem}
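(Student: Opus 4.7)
The plan is to specialize Proposition~\ref{Proptau3-explicit} to the helix regime, extract algebraic constraints on the Frenet frame components along $\gamma$, and then integrate the resulting tangent vector field.

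With $\kappa\equiv a>0$ and $\tau\equiv b$ constant, every summand in \eqref{Tau3-EqT} contains a derivative of $\kappa$ or $\tau$, so that equation is automatically satisfied. Equation \eqref{Tau3-EqB} reduces to $2aN_3\bigl[(2a^{2}+b^{2})B_3+ab\,T_3\bigr]=0$, and the bracketed factor cannot vanish on an open interval: substituting $B_3=-ab\,T_3/(2a^{2}+b^{2})$ into the reduced form of \eqref{Tau3-EqN} causes the $T_3$-dependent contributions to cancel, leaving $(a^{2}+b^{2})^{2}+(2a^{2}+b^{2})=0$, incompatible with $a>0$. Hence $N_3\equiv 0$. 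Writing $E_3=T_3T+N_3N+B_3B$ yields the orthonormality relation $T_3^{2}+B_3^{2}=1$, and substituting $N_3=0$ into \eqref{Tau3-EqN} produces an algebraic relation between $a,b,T_3,B_3$; combined with $T_3^{2}+B_3^{2}=1$ it becomes a quadratic equation in $u=B_3^{2}$ whose roots depend only on $a,b$, so $T_3$ and $B_3$ must be constant along $\gamma$.

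From \eqref{nablaTV} applied to $V=T$, the relation $T_3'=T_1^{2}-T_2^{2}$ together with $T_3$ constant forces $T_1=\pm T_2$; using the isometry $(x,y,z)\mapsto(x,-y,z)$ from \eqref{isometries-bis-SOL} we may assume $T_1=T_2=t$, so $T=(t,t,T_3)$ has constant components in the frame $\{E_1,E_2,E_3\}$. A direct computation then gives $\kappa=\sqrt{2}\,|tT_3|$ and $|\tau|=2t^{2}$; after choosing signs so that $a,b>0$, this yields the identity $a^{2}+b^{2}=b$. Substituting $B_3T_3=-a$ and $B_3^{2}=b$ into \eqref{Tau3-EqN} (with all derivatives and $N_3$ set to zero) and using $a^{2}+b^{2}=b$ collapses the equation to $2b(2b^{2}-3b+1)=0$; the only root compatible with $a>0$ and $a^{2}=b(1-b)\ge 0$ is $b=1/2$, giving $a=1/2$. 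Integrating $x'e^{z}=1/2$, $y'e^{-z}=1/2$, $z'=1/\sqrt 2$ and normalizing the integration constants by the translations in \eqref{isometries-SOL} recovers \eqref{parametrizations}; the case $b=-1/2$ is obtained by applying $(x,y,z)\mapsto(x,-y,z)$.

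The main obstacle is the coupling between the algebraic content of \eqref{Tau3-EqN} and the Frenet-frame ODEs produced by \eqref{nablaTV}: the clean factorization that forces $N_3\equiv 0$, the reduction to a quadratic in $B_3^{2}$ whose admissible roots are constrained by the value of $\kappa$ dictated by the constant tangent $T$, and the final compatibility check that pins down $a=b=1/2$ all require careful bookkeeping of the orthonormality relations and the explicit formulas for $\kappa$ and $\tau$.
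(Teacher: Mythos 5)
Your first half reproduces the paper's Lemma~\ref{Th-N3=0} almost verbatim: the automatic vanishing of \eqref{Tau3-EqT}, the factorization of \eqref{Tau3-EqB} as $2aN_3\bigl[(2a^2+b^2)B_3+abT_3\bigr]=0$, the elimination of the bracket via the identity $(a^2+b^2)^2+(2a^2+b^2)=0$, and the constancy of $B_3$ as a root of a fixed polynomial quadratic in $B_3^2$ are exactly the paper's steps. Your second half is a genuinely different route: the paper substitutes the explicit formulas \eqref{T3N3B3} for $N_3,T_3$, integrates the resulting ODEs to get $x(s),y(s),z(s)$ in closed form, computes $a,b$ from the coordinate formulas \eqref{geodesiccurvaturegeneral}--\eqref{torsionegeneral}, and only then feeds the result back into \eqref{Tau3-EqNtris}--\eqref{Tau3-EqNtris2}; you instead work intrinsically, using \eqref{nablaTV} to show that $T$ has constant components $(t,t,T_3)$, reading off $\kappa=\sqrt2\,|tT_3|$, $|\tau|=2t^2$ and the relation $a^2+b^2=b$ directly from the frame, and integrating only at the very end. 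This is a cleaner organization and avoids the case analysis over $z_1,z_2$ and the four parametrizations of \eqref{parametrizations-c2}.

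However, there is a concrete gap at the decisive step, and it is a sign that governs existence versus non-existence. With your stated values $B_3T_3=-a$, $B_3^2=b$ and $a^2=b(1-b)$, the left side of \eqref{Tau3-EqNbis} (divided by $a$) is
\[
a^4+2a^2b+(2a^2+b^2)(1-b)^2=2b(1-b)=2a^2>0,
\]
not $2b(2b^2-3b+1)$; taken at face value your substitution proves that \emph{no} proper triharmonic helix exists. The cubic $2b(2b^2-3b+1)$, whose root $b=1/2$ gives the theorem, arises only if $B_3T_3=+a$, i.e.\ with the opposite relative sign between $\tau$ and $B_3T_3$. Since $\tau$ and $B_3$ flip together under reversal of the binormal, the product $\tau B_3T_3$ is orientation-independent and must be computed, not chosen: you need to exhibit $B$ explicitly for the frame $T=(t,t,T_3)$, $N=\pm\tfrac{1}{\sqrt2}(1,-1,0)$ and verify which sign of $B_3T_3$ actually occurs together with your sign normalization $b>0$. (The paper sidesteps this by keeping both options $T_3=\pm\sqrt{1-B_3^2}$ alive in \eqref{Tau3-EqNtris} and \eqref{Tau3-EqNtris2} and checking both against the explicitly integrated curve.) Until that sign is pinned down by computation, your argument does not yet establish either the existence statement or the values $a=b=1/2$. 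A second, smaller point: the passage from $T_1^2=T_2^2$ to $T_1=T_2=t$ constant needs the observation that $2t^2=1-T_3^2$ is constant and that $t\neq0$ (else $\kappa=0$), so that continuity fixes the sign; this is easy but should be said.
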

\begin{remark}In \cite{Caddeo-altri-Mediterr-2006} the authors proved that there is no proper biharmonic curve in Sol. By contrast, Theorem\link\ref{Th-main-Existence} shows that proper triharmonic curves in Sol do exist. 
\end{remark}
\section{Proofs}\label{Sec-proofs}
\begin{proof}[Proof of Proposition\link\ref{Proptau3-explicit}] We want to compute the $3$-tension field, i.e., the left side of \eqref{3-harmonicity-curves} expressing it with respect to the Frenet frame field $\{T,N,B\}$.

We shall find that, when $r=3$, the left-hand side of \eqref{Tau3-EqT}, \eqref{Tau3-EqN} and \eqref{Tau3-EqB} are precisely the components of the left-hand side of \eqref{r-harmonicity-curves} along $T$, $N$ and $B$ respectively. 

We shall use again the notation $\big ( v_1,v_2,v_3\big )=v_1 T+v_2 N + v_3 B$. First, a simple computation gives: 

\begin{eqnarray}\label{NablaiT}\nonumber
\nabla_T T&=&\big (0,\kappa,0\big )\\\nonumber
\nabla_T^2 T&=&\big (-\kappa^2,\kappa',\kappa \tau\big )\\\nonumber
\nabla_T^3 T&=&\big (-3\kappa \kappa' ,-\kappa^3-\kappa \tau^2+ \kappa'',2\kappa' \tau+\kappa \tau'\big )\\\nonumber
\nabla_T^5 T&=&\Big(5 \big(2 \kappa^3 \kappa'+\kappa \big(\tau ^2 \kappa'-\kappa^{(3)}\big)-2 \kappa' \kappa''+\kappa^2 \tau  \tau'\big),\\\nonumber
   &&\,\,\,\,\kappa^{(4)}-6 \tau ^2 \kappa''-10 \kappa^2 \kappa''-12 \tau  \kappa' \tau '+\kappa \left(\tau ^4-3 \left(5 \kappa'^2+\tau'^2\right)-4 \tau  \tau ''\right)\\
   &&+2 \kappa^3 \tau ^2+\kappa^5,\\
  & & 4 \kappa^{(3)} \tau +6 \kappa'' \tau '+4 \kappa' \tau ''-9 \kappa^2 \tau  \kappa'-4 \tau ^3 \kappa'-\kappa^3
   \tau'+\kappa \big(\tau ^{(3)}-6 \tau ^2 \tau '\big)
\Big ).  \nonumber 
\end{eqnarray}

Next, using \eqref{curv-1} and \eqref{curv-2} we compute
\begin{equation}\label{curv-3}
\begin{array}{lll}
R(T,N,T,N)&=&-1+2 B_3^2 \\
R(T,N,T,B)&=&-2 N_3 B_3 \\
R(B,T,B,T)&=& -1+2 N_3^2\\
R(B,N,N,T)&=&2 T_3 B_3 \\
R(B,N,B,T)&=&-2 T_3 N_3 \,.
\end{array}
\end{equation}
Finally, a computation using \eqref{NablaiT} and \eqref{curv-3} into \eqref{3-harmonicity-curves} leads us to complete the proof of this proposition.
\end{proof}

\begin{proof}[Proof of Theorem\link\ref{Th-main-Existence}] 

First, we establish a technical Lemma:
\begin{lemma}\label{Th-N3=0} Let $\gamma$ be a proper triharmonic Frenet helix in Sol.
Then 
\begin{itemize}
\item [\rm(i)] $N_3\equiv 0$ along $\gamma$.
\item [\rm(ii)] Both $T_3$ and $B_3$ are constant along $\gamma$.
\end{itemize}
\end{lemma}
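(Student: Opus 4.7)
The plan is to substitute $\kappa\equiv a>0$ and $\tau\equiv b$ into the three equations of Proposition~\ref{Proptau3-explicit}, and to combine the resulting algebraic relations with the orthonormality identity $T_3^2+N_3^2+B_3^2=1$, which holds because $E_3=T_3\,T+N_3\,N+B_3\,B$ is a unit vector.

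Under these substitutions all derivatives of $\kappa$ and $\tau$ vanish, so \eqref{Tau3-EqT} becomes trivial, \eqref{Tau3-EqB} collapses, after dividing by $2a$, to
\[
N_3\bigl[B_3(b^2+2a^2)+ab\,T_3\bigr]=0,
\]
and \eqref{Tau3-EqN} reduces to the purely algebraic identity
\[
-2ab\,B_3T_3-2(b^2+2a^2)B_3^2+(a^2+b^2)^2+2a^2+b^2=0.
\]

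For part (i) I would argue by contradiction. If $N_3\not\equiv 0$, then on some open subinterval where $N_3\neq 0$ the bracket in the reduced \eqref{Tau3-EqB} must vanish. If $b\neq 0$ this gives $T_3=-(b^2+2a^2)B_3/(ab)$; inserting this into the reduced \eqref{Tau3-EqN} the $B_3^2$-terms cancel exactly and one is left with $(a^2+b^2)^2+2a^2+b^2=0$, which is impossible for $a>0$. If instead $b=0$ the bracket forces $B_3=0$, and the reduced \eqref{Tau3-EqN} becomes $a^4+2a^2=0$, again impossible. Hence $N_3\equiv 0$ along $\gamma$.

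For part (ii), with $N_3\equiv 0$ the orthonormality constraint simplifies to $T_3^2+B_3^2=1$, while the reduced \eqref{Tau3-EqN} expresses $B_3T_3$ as an affine function of $B_3^2$. Setting $u=B_3^2$ and using $(B_3T_3)^2=u(1-u)$, one gets a polynomial equation for $u$ which is a quadratic with strictly positive leading coefficient $4[(b^2+2a^2)^2+a^2b^2]$ (still positive when $b=0$, since $(2a^2)^2>0$). Therefore $B_3^2$ takes only finitely many values, and by continuity of $B_3$ it is constant. Since $B_3=0$ would again force $(a^2+b^2)^2+2a^2+b^2=0$, we must have $B_3\neq 0$, and then $T_3$ is algebraically determined by $B_3$ and $B_3^2$, so $T_3$ is also constant. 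The main obstacle is to spot the clean $B_3^2$-cancellation in step (i); once that miraculous cancellation is noticed, the whole lemma follows almost mechanically from the algebra together with the continuity argument.
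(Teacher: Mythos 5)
Your proposal is correct and follows essentially the same route as the paper: after substituting constant $\kappa$ and $\tau$, the paper likewise factors the $B$-equation as $2aN_3\big((2a^2+b^2)B_3+abT_3\big)=0$, eliminates one of $T_3,B_3$ to reduce the $N$-equation to $(a^2+b^2)^2+2a^2+b^2=0$ for the contradiction in (i), and in (ii) squares the $N$-equation against $T_3^2+B_3^2=1$ to show $B_3$ satisfies a nontrivial polynomial with constant coefficients. The only cosmetic differences are that the paper solves the bracket for $B_3$ (avoiding your $b=0$ case split) and works with a quartic in $B_3$ rather than your quadratic in $u=B_3^2$.
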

\begin{proof}[Proof of Lemma\link\ref{Th-N3=0}] 
(i) We use Proposition\link\ref{Proptau3-explicit}. Direct substitution shows that \eqref{Tau3-EqT} always holds when $\kappa(s)\equiv a$ and $\tau(s)\equiv b$. As for equations \eqref{Tau3-EqN} and \eqref{Tau3-EqB}, they become
\begin{equation}\label{Tau3-EqNbis}
a \Big[a^4 - 2 a b B_3 T_3 + 2 a^2 (1 + b^2 - 2 B_3^2) + 
   b^2 (1 + b^2 - 2 B_3^2)\Big ]=0 
\end{equation}
and
\begin{equation}\label{Tau3-EqBbis}
2 a N_3 \big(2 a^2 B_3 + b^2 B_3 + a b T_3\big)=0
\end{equation}
respectively. Now, let us assume that $N_3(s_0)\neq 0$ for some $s_0$. Then, near $s_0$, we must have
\[
B_3=- \frac{a b T_3}{2a^2+b^2 } \,.
\]
Inserting this condition into the left side of \eqref{Tau3-EqNbis} and simplifying we find
\[
a \Big [ a^4 + b^2 + b^4 + 2 a^2 (1 + b^2)\Big ]=0 \,,
\]
which is clearly impossible since $a>0$. Then $N_3\equiv 0$. 

(ii) Since $\{T,N,B\}$ is an orthonormal frame field along $\gamma$ we deduce that $T_3^2+N_3^2+B_3^2=1$. Thus, since $N_3=0$, we must have
\[
T_3=\pm \sqrt{1-B_3^2}\,.
\]
Replacing this into \eqref{Tau3-EqNbis} we obtain
\[
a^4 + 2 a^2 (1 + b^2 - 2 B_3^2) + b^2 (1 + b^2 - 2 B_3^2) \mp 
 2 a b B_3 \sqrt{1 - B_3^2}=0\,.
\]
Then it is easy to deduce that
\[
4 a^2 b^2 B_3^2 (-1 + B_3^2) + \Big(a^4 + 2 a^2 (1 + b^2 - 2 B_3^2) + 
   b^2 (1 + b^2 - 2 B_3^2)\Big)^2=0
\]
along $\gamma$. It follows that $B_3$ is a root of a not identically zero polynomial with constant coefficients. Therefore, $B_3$ is constant along $\gamma$ and so is $T_3$.
\end{proof}
\begin{remark} Part (i) of Lemma\link\ref{Th-N3=0} was obtained by similar methods in \cite{MR4424860}, but to help the reader we have inserted here its proof.
\end{remark}

As a consequence of Lemma\link\ref{Th-N3=0}, we can assume that $N_3=0$ along $\gamma$. According to \eqref{Tau3-EqNbis}, we have to understand when there exists a Frenet helix $\gamma(s)=(x(s),y(s),z(s))$ such that
\begin{equation}\label{T3+B3=1}
 T_3^2+ B_3^2=1
\end{equation}
and
\begin{equation}\label{Tau3-EqNtris}
a^4 + 2 a^2 (1 + b^2 - 2 B_3^2) + b^2 (1 + b^2 - 2 B_3^2) - 
 2 a b B_3 \sqrt{1 - B_3^2}=0 
\end{equation}
with $T_3=\sqrt{1-B_3^2}$ constant along $\gamma$, or 
\begin{equation}\label{Tau3-EqNtris2}
a^4 + 2 a^2 (1 + b^2 - 2 B_3^2) + b^2 (1 + b^2 - 2 B_3^2) + 
 2 a b B_3 \sqrt{1 - B_3^2}=0 
\end{equation}
with $T_3=-\sqrt{1-B_3^2}$.

A routine computation shows that for any helix $\gamma$ we have:
\begin{equation}\label{T3N3B3}
\begin{array}{lll}
{\rm (i)}&T_3=&z' \\
&&\\
{\rm (ii)}&N_3=&-\displaystyle{\frac{1}{a} \big ( e^{2z}x'^2- e^{-2z}y'^2-z'' \big )}\\
&&\\
{\rm (iii)}&B_3=&\displaystyle{\frac{1}{a} \big ( -y'x''+x' (-4y'z'+y'') \big )} 
\end{array}
\end{equation}
Now, since $N_3=0$ and $T_3$ is constant, we deduce from \eqref{T3N3B3}(ii) that either
\begin{equation}\label{z1}
z(s)=z_1(s)=\frac{1}{2} \log \left (\frac{y'}{x'} \right )
\end{equation} 
or
\begin{equation}\label{z2}
z(s)=z_2(s)=\frac{1}{2} \log \left (-\frac{y'}{x'} \right )
\end{equation} 
with the obvious sign restrictions on $x',y'$. Let us first assume that $z(s)=z_1(s)$. Then
\[
T_3=z_1'= \frac{1}{2} \left (-\frac{x''}{x'}+\frac{y''}{y'} \right )=c_1 \quad\quad (|c_1|\leq1)\,.
\]
Integration of this condition yields
\begin{equation}\label{yprimoxprimo}
y'=c_2 e^{2c_1s}x'\quad (c_2>0)\,.
\end{equation}

Next, using \eqref{T}, \eqref{z1} and imposing $\langle T,T\rangle=1$ we deduce
\[
c_1^2+2 c_2 e^{2c_1s}x'^2=1 
\]
along $\gamma$. Integration of this equality tells us that either
\begin{equation}\label{x1}
x(s)=x_1(s)=-\frac{\sqrt{(1-c_1^2)}}{c_1\sqrt{2 c_2}} e^{-c_1s}+c_x 
\end{equation}
or 
\begin{equation}\label{xtilde}
x(s)=\tilde{x}_1(s)=\frac{\sqrt{(1-c_1^2)}}{c_1\sqrt{2 c_2}} e^{-c_1s}+c_x \,.
\end{equation}
First, let us assume that we are in the case \eqref{x1}. Then direct substitution into \eqref{yprimoxprimo} and integration enables us to deduce the explicit expression for $y(s)$ and, using \eqref{z1}, for $z(s)$ as well. The final output is:
\[
y(s)=y_1(s)=\frac{\sqrt{(1-c_1^2)c_2}}{c_1\sqrt{2 }} e^{c_1s}+c_y \,,\quad z_1(s)=\frac{1}{2}\log \left ( c_2  e^{2c_1s}\right ) \,.
\]
Now, using the explicit expression $(x_1(s),y_1(s),z_1(s) )$ into \eqref{geodesiccurvaturegeneral} and \eqref{torsionegeneral} we find that we must have:
\[
a^2=c_1^2-c_1^4 \,, \quad b=1-c_1^2 \,.
\]
Finally, using these values for $a,b$ into  \eqref{Tau3-EqNtris}, \eqref{Tau3-EqNtris2}, together with $B_3=\pm \sqrt{1-c_1^2}$, it is straightforward to check that the only acceptable values are $c_1=\pm 1/\sqrt 2$. Both these values for $c_1$ give $a=1/2$ and $b=1/2$. If we now choose $x=\tilde{x}_1$ the analysis is similar. We find 
\[
y(s)=\tilde{y}_1(s)=-\frac{\sqrt{(1-c_1^2)c_2}}{c_1\sqrt{2 }} e^{c_1s}+c_y \,,\quad z_1(s)=\frac{1}{2}\log \left ( c_2  e^{2c_1s}\right ) 
\]
and the conclusion is as before. By way of summary, the only possible solutions associated to the case $z(s)=z_1(s)$ are
\[
\gamma(s)=\left(x_1,y_1,z_1 \right)
\]
where
\begin{equation}\label{parametrizations-c2}
\begin{array}{l}
x_1=-\frac{e^{-\frac{s}{\sqrt{2}}}}{ \sqrt{{2c_2}}}+c_x\,,\quad y_1=\frac{e^{\frac{s}{\sqrt{2}}}\sqrt{c_2}}{ \sqrt{2}}+c_y\,,\quad z_1(s)=\frac{1}{2} \log \left(c_2 e^{\sqrt 2 s}\right) \\
{\rm or}\\
x_1=\frac{e^{\frac{s}{\sqrt{2}}}}{ \sqrt{{2c_2}}}+c_x\,,\quad y_1=-\frac{e^{\frac{-s}{\sqrt{2}}}\sqrt{c_2}}{ \sqrt{2}}+c_y\,,\quad z_1(s)=\frac{1}{2} \log \left(c_2 e^{-\sqrt 2 s}\right )\\
{\rm or}\\
x_1=\frac{e^{-\frac{s}{\sqrt{2}}}}{ \sqrt{{2c_2}}}+c_x\,,\quad y_1=-\frac{e^{\frac{s}{\sqrt{2}}}\sqrt{c_2}}{ \sqrt{2}}+c_y\,,\quad z_1(s)=\frac{1}{2} \log \left(c_2 e^{\sqrt 2 s}\right )\\
{\rm or}\\
x_1=-\frac{e^{\frac{s}{\sqrt{2}}}}{ \sqrt{{2c_2}}}+c_x\,,\quad y_1=\frac{e^{\frac{-s}{\sqrt{2}}}\sqrt{c_2}}{ \sqrt{2}}+c_y\,,\quad z_1(s)=\frac{1}{2} \log \left(c_2 e^{-\sqrt 2 s}\right )
\end{array}
\end{equation}
where $c_1,c_2,c_x,c_y$ are integration constants with $c_2>0$. Taking into account \eqref{isometries-SOL} and \eqref{isometries-bis-SOL}, we conclude that, up to isometries, the only proper triharmonic curve associated to the case $z(s)=z_1(s)$ is \eqref{parametrizations}, which is the first parametrization of \eqref{parametrizations-c2} with $c_x=0,c_y=0, c_2=1$.

Next, if we repeat the same analysis with $z(s)=z_2(s)$ as in \eqref{z2}, we find that
\[
c_1^2-c_1^4=a^2 \,, \quad b=-1+c_1^2 \,.
\]
Again, the only acceptable values are $c_1=\pm 1/\sqrt 2$. Both these values for $c_1$ give $a=1/2$ and $b=-1/2$. The parametrization of $\gamma$ in this case, up to isometries, is
\begin{equation*}\label{parametrizations-2}
\gamma(s)=\frac{1}{\sqrt 2}\left (e^{-\frac{s}{\sqrt 2}},\, e^{\frac{s}{\sqrt{2}}}\,,\,\,s \right)
\end{equation*}
which, up to the isometry $(x,y,z)\to (-x,y,z)$, coincides with \eqref{parametrizations} and so the proof is completed.
\end{proof}
\section{General helices}\label{Sec-helices}
As we pointed out in the introduction, the most suitable definition of a general helix in a Riemannian geometric context is a delicate matter and may depend on the specific situation. Definition\link\ref{Def-helix} above seemed to us the most appropriate for a first approach to the study of triharmonic curves in Sol. 

In this section we study the connection between the proper triharmonic Frenet helices provided by Theorem\link\ref{Th-main-Existence} and the notion of a \textit{general helix} according to the paper \cite{MR1363411} of Barros. In this article the author, along the lines of previous work by Langer and Singer (see \cite{MR772124}), proposes the following definition:
\begin{definition}\label{Def-Barros}{A smooth, regular curve }$\gamma(s)$ into a $3$-dimensional space form will be called a \textit{general helix} if there exists a Killing vector field $\mathcal{V }(s)$ with constant length along $\gamma$ and such that the angle between $\mathcal{V }$ and the unit tangent $T$ is a non-zero constant along $\gamma$. We will say that $\mathcal{V }$ is an axis of the general helix.
\end{definition}
Using this definition, Barros proved a version of Lancret Theorem for both $\h^3$ and $\s^3$. The spherical case is the most interesting because of the appearance of general helices whose geodesic curvature and torsion are related by a rule of the type $\tau(s)=\kappa(s)\pm 1$ (see \cite{MR1363411}).

Now, we discuss this notion of a general helix in our context. 

First, we recall that a base for the Killing fields on Sol is $\left \{\mathcal{V }_1,\mathcal{V }_2,\mathcal{V }_3 \right \}$, where
\begin{equation}\label{Killing-fields}
\mathcal{V }_1=e^z E_1\,,\quad \mathcal{V }_2=e^{-z} E_2\,,\quad \mathcal{V }_3=-x e^z E_1+y e^z E_2+E_3\,.
\end{equation}
Our result is:
\begin{proposition}\label{Prop-general-helices} 

{\rm (i)} There exists no proper triharmonic curve $\gamma(s)$ in Sol such that both $|\mathcal{V }_1|$ and $\beta$ are constant along $\gamma$, where $\beta$ denotes the angle between $\mathcal{V }_1$ and the unit tangent vector field $T$. The same is true for $\mathcal{V }_2$.

{\rm (ii)} Let $\gamma(s)$ be a proper triharmonic curve in Sol as in Theorem\link\ref{Th-main-Existence}. Then $|\mathcal{V }_3|=\sqrt 2$ and $\beta=0$ along $\gamma$, where $\beta$ is the angle between $\mathcal{V }_3$ and $T$. Then $\gamma$ is an integral curve of $\mathcal{V }_3\slash \sqrt 2$.
\end{proposition}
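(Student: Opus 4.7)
The plan is to prove (i) by contradiction and (ii) by direct verification. For (i), the two invariance conditions will be shown to force $\gamma$ to be a Frenet helix with $N_3\neq 0$, contradicting Lemma \ref{Th-N3=0}(i); for (ii), I will substitute the explicit parametrization \eqref{parametrizations} into $\mathcal{V}_3$ and compare with $T$.

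For part (i), I would start from $|\mathcal{V}_1|^2 = e^{2z}$: its constancy along $\gamma$ forces $z(s)$ to be constant, so $T_3 = z' \equiv 0$. Next, $\cos\beta = \langle \mathcal{V}_1, T\rangle/|\mathcal{V}_1| = T_1$, so $T_1$ is constant along $\gamma$, and the relation $T_1^2 + T_2^2 = 1$ together with continuity of $T$ forces $T_2$ to be constant as well. Since $T$ now has constant components $(T_1, T_2, 0)$ in the frame $\{E_1, E_2, E_3\}$, the formula \eqref{nablaTV} applied to $T$ gives $\nabla_T T = (T_2^2 - T_1^2)\, E_3$, so $\kappa = |T_2^2 - T_1^2|$ is constant. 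Either $\kappa = 0$ (so $\gamma$ is a geodesic, not proper) or $N = \pm E_3$ and $N_3 = \pm 1$. In the latter case, $B = \pm(T_2 E_1 - T_1 E_2)$ also has constant components, and a second use of \eqref{nablaTV} yields $\nabla_T B = \mp 2 T_1 T_2\, E_3$, so $\tau = \pm 2 T_1 T_2$ is constant. Thus $\gamma$ is a proper triharmonic Frenet helix with $N_3 \neq 0$, contradicting Lemma \ref{Th-N3=0}(i). The case of $\mathcal{V}_2 = e^{-z} E_2$ is handled by the same argument with the roles of $E_1$ and $E_2$ interchanged.

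For part (ii), I substitute $\gamma(s) = (-e^{-s/\sqrt 2}/\sqrt 2,\ e^{s/\sqrt 2}/\sqrt 2,\ s/\sqrt 2)$ directly into the components of $\mathcal{V}_3$. The exponentials cancel so that $x^2 e^{2z} = y^2 e^{-2z} = 1/2$ identically in $s$, whence $|\mathcal{V}_3|^2 = 1/2 + 1/2 + 1 = 2$. A parallel substitution into \eqref{T} gives $T = (1/2) E_1 + (1/2) E_2 + (1/\sqrt 2) E_3$, while the components of $\mathcal{V}_3$ along the same frame turn out to be $(1/\sqrt 2,\, 1/\sqrt 2,\, 1)$. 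Therefore $\mathcal{V}_3 = \sqrt 2\, T$ along $\gamma$, which at once gives $\cos\beta = 1$ (so $\beta = 0$) and displays $\gamma$ as an integral curve of $\mathcal{V}_3/\sqrt 2$.

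The main obstacle lies in part (i): one must verify that the two constancy hypotheses \emph{automatically} upgrade to constancy of both $\kappa$ and $\tau$, so that the proper triharmonic curve falls within the scope of Lemma \ref{Th-N3=0}(i). The step extracting $\tau = \pm 2 T_1 T_2$ from $\nabla_T B$ requires a little care with orientation, but once it is done the contradiction $N_3 = \pm 1 \neq 0$ is immediate. Part (ii) is essentially bookkeeping that hinges on the exponential cancellations built into \eqref{parametrizations}.
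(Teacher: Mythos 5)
Your proof is correct. Part (ii) coincides with the paper's argument: both verify by direct substitution of \eqref{parametrizations} that $\mathcal V_3=\sqrt2\,T$ along $\gamma$ (your component computation $\mathcal V_3=(1/\sqrt2,1/\sqrt2,1)=\sqrt2\,(1/2,1/2,1/\sqrt2)$ checks out). Part (i), however, takes a genuinely different route. The paper integrates the two constraints $\langle T,T\rangle=1$ and $\langle T,\mathcal V_1\rangle=|\mathcal V_1|\cos\beta$ explicitly to obtain the parametrization $\gamma(s)=(\cos\beta\, e^{-c}s+c_x,\ \sin\beta\, e^{c}s+c_y,\ c)$, computes $\kappa=\sqrt{\cos^2(2\beta)}$ and $\tau=\sin(2\beta)$ from \eqref{geodesiccurvaturegeneral}--\eqref{torsionegeneral}, and then checks that \eqref{Tau3-EqN} reduces to $\sqrt{\cos^2(2\beta)}\,(5+\cos(4\beta))=0$, which is impossible for a non-geodesic. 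You instead stay at the level of the frame components: constancy of $|\mathcal V_1|$ gives $T_3=0$, constancy of $\beta$ gives $T_1$ constant, and \eqref{nablaTV} then yields $\kappa=|T_2^2-T_1^2|$, $N=\pm E_3$, $\tau=\pm 2T_1T_2$, so the curve is a proper triharmonic Frenet helix with $N_3=\pm1$, contradicting Lemma\link\ref{Th-N3=0}(i). This is legitimate — the lemma's hypothesis (constant $\kappa>0$ and $\tau$) is exactly what you establish, and its proof (the impossibility of \eqref{Tau3-EqNbis} when $N_3\neq0$) covers the case $T_3=B_3=0$, where that equation becomes $a[a^4+2a^2(1+b^2)+b^2(1+b^2)]=0$. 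Your version avoids the explicit integration and reuses earlier machinery; the paper's version is self-contained at this point and exhibits concretely which triharmonicity equation fails. The only small cautions in your write-up — ruling out $\kappa\equiv0$ as the geodesic case, and the sign of $T_2$ via continuity — are handled adequately.
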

\begin{proof}(i) We argue by contradiction. We observe that $|\mathcal{V }_1|^2=e^{2z(s)}$. Therefore, we must have $z(s)=c$ along $\gamma$. Then explicit integration of the conditions $\langle T,T \rangle =1$ and $\langle T,\mathcal{V }_1 \rangle =|\mathcal{V }_1|\cos \beta$ tells us that $\gamma(s)$ must be of the type
\[
\gamma(s)=(\cos \beta e^{-c} s+c_x,\sin \beta e^{c} s+c_y,c)\,.
\]
Then a computation using \eqref{geodesiccurvaturegeneral} and \eqref{torsionegeneral} shows that, for this type of curve, $\kappa(s)=\sqrt{\cos^2(2 \beta)}$, $\tau(s)=\sin (2\beta)$. Moreover, equations \eqref{Tau3-EqT} and \eqref{Tau3-EqB} are verified. But \eqref{Tau3-EqN} becomes equivalent to
\[
\sqrt{\cos^2(2 \beta)} (5+ \cos (4 \beta))=0 \,.
\]
This is a contradiction since $\cos(2 \beta)=0$ is not acceptable because $\gamma$ is not a geodesic. The case of $\mathcal{V }_2$ is analogous.

(ii) We use the parametrization \eqref{parametrizations} of $\gamma$. Computing we find that along $\gamma$
\[
|\mathcal{V }_3|^2=x^2 e^{2 z}+y^2 e^{-2 z}+1=2 \,;\quad |\mathcal{V }_3|\cos \beta=  \langle T,\mathcal{V }_3\rangle = -x x' e^{2z} + y y' e^{-2z} +z'=\sqrt 2 
\]
so that $|\mathcal{V }_3|=\sqrt 2$ and $\beta=0$. {Then $\gamma$ is an integral curve of $\mathcal{V }_3\slash \sqrt 2$, as required to end the proof}.
\end{proof}


\bibliographystyle{amsalpha}

\end{document}